\newtheorem{theorem}{Theorem}[section]
\newtheorem{corollary}[theorem]{Corollary}
\newtheorem{lemma}[theorem]{Lemma}
\newtheorem{proposition}[theorem]{Proposition}
\theoremstyle{remark}
\theoremstyle{definition}
\newtheorem{definition}[theorem]{Definition}
\title{Nielsen equivalence and trisections of 4-manifolds}
\author{Gabriel Islambouli}
\date{\vspace{-4ex}}
\begin{document}
\maketitle

\begin{abstract}
The goal of this paper is to construct distinct trisections of the same genus on a fixed 4-manifold. For every $k \geq 2$, we construct $2^{k}-1$ non-diffeomorphic $(3k,k)$-trisections on infinitely many 4-manifolds. Here, the manifolds are spun Seifert fiber spaces and the trisections come from Meier's spun trisections. The technique used to distinguish the trisections parallels an established technique for distinguishing Heegaard splittings. In particular, we show that the Nielsen classes of the generators of the fundamental group, obtained from spines of the 4-dimensional 1-handlebodies of the trisection, are isotopy invariants of the trisection. If we additionally consider the action of the automorphism group on the Nielsen classes, we obtain diffeomorphism invariants of trisections. 
\end{abstract}

\section{Introduction}

Heegaard splittings have long been used as a basic tool for understanding the topology of a 3-manifold. In attempting to understand a 3-manifold by its Heegaard splittings, the first task is the classification of its Heegaard splittings, up to a suitable equivalence. Perhaps the most famous in a large collection of such results is Waldhausen's theorem \cite{FW}, which states that $S^3$ has a unique Heegaard splitting in each genus, up to isotopy. In a similar vein, Bonahon and Otal showed that lens spaces have a unique splitting in each genus \cite{BO}. The first examples of non-isotopic Heegaard splittings of the same genus came in 1970, when Engmann constructed two non-isotopic Heegaard splittings of genus 2 on the connected sum of two lens spaces \cite{RE}.

Moving to 4 dimensions, a (g,k)-trisection is a decomposition of a smooth, closed 4-manifold into three pieces, each diffeomorphic to a 4-dimensional 1-handlebody. Trisections were introduced by Gay and Kirby in \cite{GK} as an extension of Heegaard splittings into 4-dimensions, and indeed, the theories seem to share structural similarities. Most notably, Gay and Kirby show, in the same work, that any two trisections of a 4-manifold become isotopic after some number of stabilizations, an operation akin to the stabilization of a Heegaard splitting. Despite this result, up to this point, there were not any examples of trisections where the stabilization operation was actually needed in order to make two different trisections isotopic. 


A construction of such examples is the main goal of this paper. We will show that techniques which work for Heegaard splittings of 3-manifolds also work for trisections of 4-manifolds. More precisely, we associate to each isotopy class of trisections three Nielsen classes of generators of the fundamental group, which we denote $\mathscr{N}(X_1), $ $\mathscr{N}(X_2), $ and $\mathscr{N}(X_3)$. This is akin to a Heegaard splitting, which is well known to admit two Nielsen classes; a fact which has been used by numerous authors in order to distinguish Heegaard splittings (see for example \cite{BCZ}, \cite{LM} and \cite{RE}).

To apply these invariants, we look to the spun trisections of spun 4-manifolds constructed by Meier in \cite{JM}. The construction takes, as input, a Heegaard splitting of a 3-manifold, $M^3 = H_1 \cup_\Sigma H_2$, and produces a trisection, $S(\Sigma)$, of  $S(M^3)$, the manifold obtained by spinning $M^3$. It is easy to show that $\pi_1(M^3) = \pi_1(S(M^3))$. Our main application hinges on a refinement of this result, which states that, in some sense, spinning a Heegaard splitting induces a Nielsen equivalence. As a result, we obtain the following theorem.

\begin{restatable*}{theorem}{mainTheorem}
\label{thm:mainTheorem}
Let $M^3$ be a closed, orientable, 3-dimensional manifold, and let $H_1 \cup_\Sigma H_2$ and $H_1' \cup_{\Sigma'} H_2'$ be Heegaard splittings of $M$. Then if $\mathscr{N}(H_1) \neq \mathscr{N}(H_1')$, the trisections $S(\Sigma)$ and $S(\Sigma')$ are not isotopic. Moreover, if for all $\phi \in Aut(\pi_1(M))$, $\phi(\mathscr{N}(H_1)) \neq \mathscr{N}(H_1')$ then the trisections $S(\Sigma)$ and $S(\Sigma')$ are not diffeomorphic.
\end{restatable*}

Leveraging the work done on Nielsen equivalence in Fuchsian groups by Lustig and Moriah in \cite{LM}, and Boileau Collins and Zieschang in \cite{BCZ}, we obtain the following corollary.

\begin{restatable*}{corollary}{nonIsoTri}
For every $k \geq 2$, there exist 4-manifolds which admit non-isotopic $(3k,k)$-trisections of minimal genus. 
\end{restatable*}

Having obtained non-isotopic trisections, we subsequently turn our attention to diffeomorphism classes of trisections. Here, our main input is a theorem of Lustig, Moriah, and Rosenberger in \cite{LMR}, which classifies generators of certain Fuchsian groups up to Nielsen equivalence and overall automorphisms of a group. Combining their results with Theorem \ref{thm:mainTheorem} gives us the following corollary.

\begin{restatable*}{corollary}{nonDiffTri}
For every $k \geq 2$, there exist 4-manifolds which admit non-diffeomorphic $(3k,k)$-trisections of minimal genus. 
\end{restatable*}

We conclude with a closer analysis of small Seifert fiber spaces. In this case, some curiosities arise. Namely, we exhibit 3 non-isotopic trisections on a fixed 4-manifold which become isotopic after a single stabilization. Nevertheless, these trisections remain non-isotopic under any series of unbalanced stabilizations (in the sense of \cite{MSZ}) in two of the sectors. 

\section{Nielsen equivalence}
We begin by reviewing the relevant group theory. We start by defining an equivalence relation between generating sets of the same size of a fixed group.
\label{sec:NE}
\begin{definition}
Let $F_n= F[x_1,...,x_n]$ be the free group on $n$ elements with basis $(x_1,...,x_n)$. Let $G$ be a finitely generated group and $A=(a_1,...,a_n)$ and $B=(b_1,....b_n)$ be generating sets of size $n$ for  $G$. $A$ and $B$ are called \textbf{Nielsen equivalent} if there exists a basis $(y_1,y_2,..., y_n)$ for $F[x_1,...x_n]$ and a homomorphism $\phi:F_n \to G$ so that $\phi(x_i) = a_i$ and $\phi(y_i) = b_i$. 
\end{definition}

It is a classical result \cite{JN} that the automorphism group of the free group is generated by the elementary Nielsen transformations. Given the free group on $n$ elements with ordered basis $(x_1,x_2,....,x_n)$, the Nielsen transformations are the following:

\begin{enumerate}
\item Swap $x_1$ and $x_2$.
\item Cyclically permute $(x_1,x_2,....,x_n)$ to $(x_2,x_3,....,x_n, x_1)$.
\item Replace $x_1$ with $x_1^{-1}$.
\item Replace $x_1$ with $x_1x_2$.
\end{enumerate}

In light of this result, we obtain an alternative characterization of Nielsen equivalence. Given two generating sets of a group, $A=(a_1,...,a_n)$ and $B=(b_1,....b_n)$, write the $b_i$ as words, $w_i$, in the generators $(a_1,a_2,...,a_n)$ to obtain an ordered list of words, $(w_1, w_2,..., w_n)$. $A$ and $B$ are Nielsen equivalent if and only if we can successively apply the automorphisms 1-4 above to get from the ordered set $(a_1,a_2,...,a_n)$ to $(w_1, w_2,..., w_n)$. Here, we are allowed to simplify words using any applicable relations in the group.

\section{Spines of handlebodies}

For $n \geq 3$, we define an \textbf{n-dimensional handlebody of genus g} to be the unique smooth, orientable manifold obtained by attaching $g$ 1-handles to an n-dimensional ball. We will denote this manifold by $\bm{H_g^n}$. Handlebodies deformation retract onto graphs embedded within them. We call any embedded graph which is a deformation retract of $H_g^n$ a \textbf{spine} of $H_g^n$. Most naturally, one can construct a spine by connecting the cores of the 1-handles to a common point in the interior of the n-ball.

In general, a spine may contain an arbitrary number of vertices, however, in this paper, we will be particularly concerned with spines which have a single vertex. In addition, our spines will come with an orientation of each edge. Henceforth, all spines of handlebodies will be assumed to have one vertex and carry an orientation, unless otherwise noted. In this case, a spine of $H_g^n$ will be a wedge of g circles with each circle carrying an orientation. Spines are considered up to base point preserving isotopy in $H_g^n$.  If $g<2$, then a handlebody has a unique spine, but otherwise, there are infinitely many spines.

A spine of a handlebody can be altered in controlled ways to obtain a new spine of the handlebody. One may reverse the orientation of any edge, and the resulting graph is clearly still a spine. A more interesting move on one vertex spines is what is called an \textbf{edge slide}. Informally, this amounts to sliding the end of a loop over another loop in the direction of the second loops orientation and returning back to the base vertex. This is illustrated in Figure \ref{fig:edgeSlide}. 

Though not very enlightening, we also give a formal definition of an edge slide. Let $S$ be a spine of $H^n_g$, and let $l_1$ and $l_2$ be two loops of $S$ parameterized by $f_1, f_2:[0,1] \to H^n_g$. Let $g:D^2 \to H^n_g$ be an embedding of a 2-dimensional disk so that the boundary $S^1$ is parameterized by $[0,1]$ and oriented in the direction of increasing real number values with $h = g|_{S^1}$. Suppose that $h([0,\frac{1}{3}]) = f_1([\frac{2}{3},1])$ and $h([\frac{1}{3},\frac{2}{3}]) = f_2([0,1])$ where both restricted functions are orientation preserving homeomorphisms of the interval. We may obtain a new one vertex spine of $H^n_g$ by leaving all edges of G unchanged except for $l_1$ which is replaced by $(l_1 \backslash h([0,\frac{1}{3}])) \cup \overline{h([\frac{2}{3},1])}$ (where the bar indicates opposite orientation). This process is called sliding $l_1$ over $l_2$.

\begin{figure}
\label{fig:edgeSlide}
\includegraphics[scale=.55]{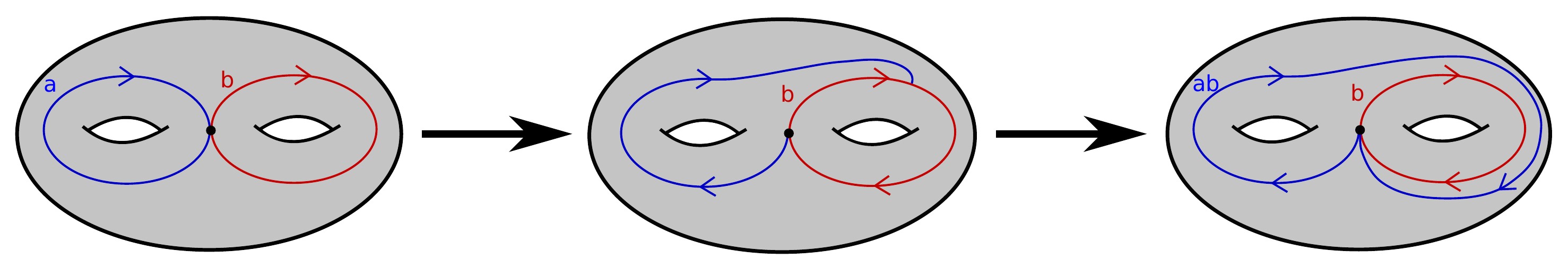}
\caption{The process of sliding an edge which represents the generator $a$ over an edge representing the generator $b$ produces a spine which has edges representing the generators $ab$ and $b.$}
\end{figure}

It is straightforward to see that $\pi_1(H^n_g)$ is a free group on $g$ generators. Moreover, any spine of $H^n_g$ specifies a basis for $F_n$. The following lemma relates the moves on spines discussed above to the Nielsen transformations discussed in Section \ref{sec:NE} and is straightforward to prove.

\begin{lemma}
\label{lem:NEareEM}
Let $S$ be a spine of $H^n_g$ consisting of loops labeled $x_1,...,x_g$. Let $\pi_1(H^n_g) = F_g$ have the ordered basis consisting of the labels $(x_1,...,x_n)$. Then Nielsen transformations on $F_g$ of type 1 and 2 correspond to relabeling the edges of $S$. Type 3 transformations correspond to reversing the orientation of an edge and type 4 transformations correspond to edge slides. Therefore, all automorphisms of $\pi_1(H^n_g)$ are realized by permutations of labels, edge slides, and reversals of orientations on spines.
\end{lemma}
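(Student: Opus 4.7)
The plan is to fix a base vertex $v$ of $S$ and use the canonical identification $\pi_1(H^n_g, v) \cong F_g$ induced by the spine, under which the oriented loop $x_i$ maps to the $i$-th free generator. For each of the four Nielsen transformations I will exhibit the claimed spine modification and verify that the new spine, read through the same identification, realizes the transformed basis. Types 1 and 2 are immediate: swapping or cyclically permuting the labels of the loops of $S$ leaves the embedded graph unchanged, so it remains a one-vertex spine, and its ordered tuple of based homotopy classes is exactly the swapped or cyclically permuted tuple $(x_{\sigma(1)}, \ldots, x_{\sigma(n)})$. Type 3 is nearly as easy: reversing the orientation of the edge $l_1$ yields a loop whose based homotopy class is $x_1^{-1}$ by the definition of inverses in $\pi_1$, while the remaining loops are untouched and the underlying graph is unchanged.

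The real content is type 4. In the notation of the formal definition of edge slide given above, the disk $g : D^2 \to H^n_g$ has boundary $h$ that decomposes into three arcs: $h([0,\tfrac{1}{3}])$ is the tail $f_1([\tfrac{2}{3},1])$ of $l_1$, $h([\tfrac{1}{3},\tfrac{2}{3}])$ is all of $l_2$, and $h([\tfrac{2}{3},1])$ is the auxiliary arc. Starting the boundary traversal at $h(0) = f_1(\tfrac{2}{3})$, the resulting loop is the concatenation (tail of $l_1$) $\cdot\, l_2 \cdot h([\tfrac{2}{3},1])$, and it is null-homotopic because it bounds the disk. Rearranging, $\overline{h([\tfrac{2}{3},1])}$ is homotopic rel endpoints to the concatenation (tail of $l_1$) $\cdot\, l_2$. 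Hence the new $l_1$, which replaces the tail of $l_1$ by $\overline{h([\tfrac{2}{3},1])}$, is based-homotopic to $l_1 \cdot l_2$ and so represents $x_1 x_2$; the remaining loops are unchanged, so the new spine yields the basis $(x_1 x_2, x_2, x_3, \ldots, x_n)$. That the modified graph is still a one-vertex spine of $H^n_g$ is obtained by realizing the slide as an ambient isotopy supported in a neighborhood of the 1-handle corresponding to $l_2$ together with the sliding disk.

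The final assertion of the lemma then follows from Nielsen's classical theorem that transformations of types 1--4 generate $\mathrm{Aut}(F_g)$: since every generator of $\mathrm{Aut}(\pi_1(H^n_g))$ is realized by one of the three spine moves, any automorphism is realized by a finite sequence of relabelings, orientation reversals, and edge slides. The main technical obstacle is the disk-chasing argument for type 4; once the boundary of $g(D^2)$ is correctly identified with the concatenation above, the rest of the lemma is bookkeeping.
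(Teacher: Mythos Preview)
The paper does not actually prove this lemma: it introduces the statement with ``is straightforward to prove'' and then moves on. Your proposal therefore supplies strictly more than the paper does, and the argument you give is correct and is the natural one. Types 1--3 are indeed bookkeeping, and your disk-chasing computation for type 4 correctly recovers the homotopy class $x_1x_2$ from the formal definition of the slide; this matches the caption of Figure~\ref{fig:edgeSlide}.

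One small point worth tightening: the sentence ``realizing the slide as an ambient isotopy supported in a neighborhood of the 1-handle corresponding to $l_2$ together with the sliding disk'' is not quite right as stated. An ambient isotopy of $H^n_g$ fixing the base vertex would carry each loop to a loop in the same based homotopy class, so it cannot take $l_1$ to a loop representing $x_1x_2$. What is true is that the slide is realized by a self-diffeomorphism of the handlebody (a handle slide on the 1-handles), which carries spines to spines; alternatively, one can argue directly that a regular neighborhood of the new wedge of circles is all of $H^n_g$, since the new loops still form a basis of $\pi_1$. Either route suffices, and since the paper treats the whole lemma as evident, this level of detail already exceeds what is expected.
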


The previous lemma shows that any ordered basis of the free group can be realized as a spine of a given handlebody. The next logical question is the uniqueness of this realization. The following lemma shows that each basis is realized by a unique spine.

\begin{lemma}
\label{lem:AllSpinesAreESEQ}
Any two spines of $H^n_g$ are related by changing orientations and edge slides.
\end{lemma}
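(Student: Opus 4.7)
The plan is to reduce the statement to the previous lemma combined with a rigidity statement for spines: namely, that the ordered, oriented spine of $H^n_g$ is determined up to ambient isotopy by the basis of $\pi_1(H^n_g) = F_g$ it represents. Granting this rigidity, I can leverage Lemma \ref{lem:NEareEM} together with Nielsen's classical theorem on the automorphism group of $F_g$ to move between any two spines.

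More precisely, let $S$ and $S'$ be two one-vertex spines of $H^n_g$. After a preliminary ambient isotopy I may assume they share a basepoint, so each determines an ordered basis of $F_g$, say $B_S = (x_1,\dots,x_g)$ and $B_{S'} = (x_1',\dots,x_g')$. By Nielsen's theorem, the change-of-basis automorphism $B_S \mapsto B_{S'}$ factors as a finite composition of elementary Nielsen transformations of types 1--4. By Lemma \ref{lem:NEareEM}, each such elementary transformation is realized on a spine by a relabeling, an orientation reversal, or an edge slide. Applying the corresponding sequence of spine moves to $S$, I obtain a spine $\widetilde S$ whose associated basis is $B_{S'}$, and in the process I have used only orientation reversals, edge slides, and relabelings. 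Since relabeling is purely a bookkeeping operation, the spines $S$ and $\widetilde S$ are related by the moves allowed in the statement.

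It remains to prove the rigidity claim: two one-vertex spines of $H^n_g$ inducing the same ordered basis of $\pi_1(H^n_g)$ are ambient isotopic. For $n \geq 4$ this is a routine general position argument, since a 1-complex in a manifold of dimension at least 4 is isotopic to any other embedding of it which is homotopic rel basepoint, and two spines with the same induced basis are such by definition of $\pi_1$. For $n = 3$ the claim still holds but requires more care: one passes to the dual system of compressing disks and uses that two cut systems of $H^3_g$ which present the same basis of $F_g$ are related by disk slides (an innermost-disk argument on the intersections of the two disk systems), which translate back to edge slides on the spines.

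The main obstacle is this last rigidity step, and in particular its 3-dimensional incarnation; the group-theoretic part is essentially bookkeeping once Lemma \ref{lem:NEareEM} and Nielsen's theorem are in hand. In higher dimensions one can dispose of it cleanly by general position, but in dimension three one must commit to either the dual disk argument sketched above or to a direct argument that the regular neighborhood of a spine with a given basis is unique up to isotopy within $H^3_g$.
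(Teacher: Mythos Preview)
Your proposal is correct and follows essentially the same approach as the paper: realize Nielsen transformations geometrically via Lemma~\ref{lem:NEareEM} and then use general position for $n \geq 4$, and pass to dual disk systems for $n = 3$. The only difference is that in dimension~3 the paper invokes Reidemeister--Singer directly (any two minimal disk systems are related by disk slides, hence any two spines by edge slides) rather than first matching bases; this is slightly cleaner, since your dimension-3 ``rigidity'' argument in fact produces edge-slide equivalence rather than the ambient isotopy you set out to prove, making the preliminary Nielsen step redundant there.
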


\begin{proof}
In dimension 3, this is well known, but we sketch a proof for completeness. In a 3-dimensional handlebody, a spine gives rise to a unique set of disks dual to the spine. In our situation, where the spine has a single vertex, we get a minimal disk system, that is, a collection of disks which cuts the handlebody into a single 3-ball. Conversely, a minimal disk system also gives rise to a unique spine. It follows from the work of Reidemeister and Singer in \cite{KR} and \cite{JS} that any two minimal disk systems for a handlebody are related by a sequence of disk slides. If $D_1$ and $D_2$ are dual disks for $l_1$ and $l_2$, respectively, then the disk system obtained by disk sliding $D_1$ over $D_2$ is dual to the spine obtained by edge sliding $l_2$ over $l_1$. Thus any spine can be obtained by converting to disk systems and performing the dual edge slides prescribed by the disk slides between the disk systems.

In dimensions $n \geq 4$, the situation is simpler. Let $S$ and $S'$ be spines consisting of loops $s_1,...,s_g$ and $s'_1,...,s'_g$ respectively. The homotopy classes of these loops specify 2 bases for $F_g$. These bases are related by Nielsen transformations which, by Lemma \ref{lem:NEareEM}, can be realized geometrically as moves on spines. Apply these moves to $S$ until we obtain loops in the same homotopy class as the loops in $S'$. Now since $n \geq 4$ homotopic loops are in fact isotopic. Then loop of $S$ can be isotoped to the corresponding loop of $L'$ homotopic to it by an isotopy which, by general position, misses the other loops.
\end{proof}

\section{Decompositions of manifolds}

In this section, we will briefly review decompositions of 3- and 4-manifolds into handlebodies as well as equivalence relations between these decompositions. It is worth noting that these decompositions have been generalized to piecewise linear manifolds of arbitrary dimension in \cite{RT}. We begin with dimension 3.

\subsection{Heegaard splittings of 3-manifolds}

\begin{definition}
A \textbf{genus g Heegaard splitting} of a 3-manifold $M^3$ is a genus g surface, $\Sigma_g$, embedded in $M^3$ so that the complement of an open regular neighborhood of $\Sigma_g$ in $M^3$ has two components, $H_1$ and $H_2$, each homeomorphic to $H_g^3$. Two Heegaard splittings defined by $H_1 \cup_{\Sigma} H_2$ and $H_1' \cup_{\Sigma'} H_2'$ are \textbf{isotopic as Heegaard splittings} if $\Sigma$ is isotopic $\Sigma'$ by an isotopy taking $H_1$ to $H_1'$ and $H_2$ to $H_2'$. Two Heegaard splittings, denoted as above, are \textbf{homeomorphic as Heegaard splittings} if there is a homeomorphism $f:M^3 \to M^3$ so that $f(H_1) = H_1'$ and $f(H_2) = H_2'$.
\end{definition}

A Heegaard splitting can naturally be seen as the middle level of a self indexing Morse function. From this point of view, $H_1$ is the union of the 0- and 1-handles, so that the inclusion map $i: H_1 \hookrightarrow M^3$ induces a surjection of the fundamental groups. The fundamental group of $H_1$ is likewise surjected onto by the fundamental group of its spine under the inclusion map. By inverting the Morse function, $H_2$ becomes the union of the 0- and 1-handles, so the fundamental group of the spine of $H_2$ also surjects onto $\pi_1(M^3)$. Thus, a Heegaard splitting determines two sets of generators for $\pi_1(M^3)$. In light of Lemmas \ref{lem:NEareEM} and \ref{lem:AllSpinesAreESEQ}, the spines of each handlebody give well defined Nielsen equivalence classes of $\pi_1(M^3)$. This motivates the following definition, which will also be used in decompositions in other dimensions.

\begin{definition} 

Let $i: H_g^n \hookrightarrow M^n$ be an embedding of an n-dimensional genus g handlebody which induces a surjection of fundamental groups and let $H$ denote the image of $ H_g^n$ in $M^n$. We will denote by $\pmb{\mathscr{N}}\bm{(H)}$ the Nielsen class of the generators of $\pi_1(M^n)$ obtained from spines of $H$. If $f$ is a self homeomorphism of $M^n$ we will denote by $\bm{f}(\pmb{\mathscr{N}}\bm{(H))}$ the Nielsen class obtained by applying the induced map on fundamental groups to each loop of some spine of H. 

\end{definition}

The following proposition has been used extensively to distinguish Heegaard splittings (see \cite{BCZ} and \cite{LM} for particular applications).

\begin{proposition}
\label{prop:HSIsoImpliesNE}
Let $H_1 \cup_{\Sigma} H_2$ and $H_1' \cup_{\Sigma'} H_2'$ be two genus g Heegaard splittings of $M^3$. If these Heegaard splittings are isotopic, then $\mathscr{N}(H_1) = \mathscr{N}(H_1')$, and $\mathscr{N}(H_2) = \mathscr{N}(H_2')$. If the Heegaard splittings are homeomorphic by some homeomorphism $f$, then $f(\mathscr{N}(H_1)) = \mathscr{N}(H_1')$ and $f(\mathscr{N}(H_2)) = \mathscr{N}(H_2')$.
\end{proposition}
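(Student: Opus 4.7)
The plan is to transport a spine of $H_1$ via the isotopy (respectively, the homeomorphism $f$) to obtain a spine of $H_1'$, and then to check that the resulting generating set of $\pi_1(M^3)$ coincides, up to Nielsen equivalence, with any generating set built from a spine of $H_1'$ directly. The workhorse behind this is Lemma \ref{lem:AllSpinesAreESEQ} together with Lemma \ref{lem:NEareEM}, which jointly tell us that the Nielsen class $\mathscr{N}(H)$ of an embedded handlebody is a well-defined invariant that does not depend on the choice of spine.

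First I would fix a basepoint $\ast \in M^3$ once and for all, and set up the bookkeeping for $\mathscr{N}(H)$. Given a spine $S \subset H$ with single vertex $v$, I would pick a path $\alpha$ from $\ast$ to $v$; conjugating the oriented loops of $S$ by $\alpha$ yields a generating tuple of $\pi_1(M^3,\ast)$. A different choice of $\alpha$ changes the tuple by a simultaneous inner automorphism of the free group, which is a composition of Nielsen transformations, and a different choice of spine is handled directly by Lemmas \ref{lem:AllSpinesAreESEQ} and \ref{lem:NEareEM}. Hence the Nielsen class $\mathscr{N}(H)$ depends only on the embedded handlebody.

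Next, for the isotopy statement, I would pick an ambient isotopy $\Phi_t$ of $M^3$ with $\Phi_0 = \mathrm{id}$ and $\Phi_1(H_1) = H_1'$, $\Phi_1(H_2) = H_2'$. The image $\Phi_1(S)$ is a spine of $H_1'$, and the image $\Phi_1(\alpha)$ is a path from $\Phi_1(\ast)$ to the vertex of $\Phi_1(S)$. Pre-concatenating with the track $t \mapsto \Phi_t(\ast)$ produces a path from $\ast$ to $\Phi_1(v)$, so this spine yields a generating tuple of $\pi_1(M^3,\ast)$. Since $\Phi_1$ is isotopic to the identity, this tuple represents the same elements of $\pi_1(M^3,\ast)$ as the original generators coming from $S$. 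Comparing this generating tuple to any one produced from a spine of $H_1'$ directly, the two differ only by Nielsen moves (coming from the change of spine and basepoint arc), so $\mathscr{N}(H_1) = \mathscr{N}(H_1')$; identical reasoning gives $\mathscr{N}(H_2) = \mathscr{N}(H_2')$.

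Finally, for the homeomorphism statement with $f(H_i) = H_i'$, I would run the same argument, replacing $\Phi_1$ by $f$. The only change is that the induced map on $\pi_1(M^3,\ast)$ need not be the identity; after conjugating by an arc joining $\ast$ to $f(\ast)$ one obtains an automorphism of $\pi_1(M^3,\ast)$ that, on the nose, agrees with $f_\ast$. Therefore the generating set obtained by pushing a spine of $H_1$ forward through $f$ is exactly a representative of $f(\mathscr{N}(H_1))$, and by the well-definedness argument this equals $\mathscr{N}(H_1')$. The main potential pitfall is the basepoint bookkeeping—one has to make sure that the various ambiguities in how to connect $\ast$ to the vertex of a spine really do fit into the inner-automorphism slack absorbed by Nielsen equivalence—but once that is set up the geometric content reduces to the single observation that $\Phi_1$ (resp.\ $f$) carries spines to spines.
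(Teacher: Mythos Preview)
Your proposal is correct and follows essentially the same route as the paper: push a spine of $H_1$ through the isotopy (resp.\ $f$) to obtain a spine of $H_1'$, then invoke Lemmas \ref{lem:AllSpinesAreESEQ} and \ref{lem:NEareEM} to conclude Nielsen equivalence. The only difference is that you spell out the basepoint bookkeeping explicitly, whereas the paper leaves it implicit.
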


\begin{proof}
If $H_1 \cup_{\Sigma} H_2$ and $H_1' \cup_{\Sigma'} H_2'$ are isotopic, then the isotopy takes $H_1$ to $H_1'$. In particular, a spine for $H_1$ is taken to a spine for $H_1'$. By Lemma \ref{lem:AllSpinesAreESEQ} these spines can be made equivalent by a series of edge slides and reversals of orientations. By Lemma \ref{lem:NEareEM} this implies that the generators of $\pi_1(M^3)$ coming from the spines are Nielsen equivalent. An identical argument shows that $\mathscr{N}(H_2) = \mathscr{N}(H_2')$. If the splittings are homeomorphic, a similar argument applies after the application of the homeomorphism. 
\end{proof}

\subsection{Trisections of 4-manifolds}

We now turn our attention to dimension 4. We again seek to decompose an arbitrary smooth, closed, orientable manifold into handlebodies, but simply repeating the construction in dimension 3 yields a triviality. The only 4-dimensional manifold which is the union of 2 copies of $H_g^4$ is $\#^g S^1 \times S^3$, as can be quickly derived from a theorem of Laudenbach and Poenaru \cite{LP}. If instead, however, we allow ourselves  3 copies of $H_k^4$, glued appropriately, the theory becomes much richer. We begin with a more precise definition.
\begin{definition}
A $\bm{(g,k)}$\textbf{-trisection} of a 4-manifold, $M$, is a decomposition $M = X_1 \cup X_2 \cup X_3$ such that: 
\begin{itemize}
\item $X_i \cong  H_k^4$
\item $X_i \cap X_j = H_{ij } \cong H_g^3$ for $i \neq j$
\item $\partial X_i = H_{ij}\cup H_{ik}$ is a genus g Heegaard splitting for  $\partial X_i = \#^kS^1 \times S^2$
\end{itemize}
Two trisections of a fixed 4-manifold, $M$, defined by $X_1 \cup X_2 \cup X_3$ and $X_1' \cup X_2' \cup X_3'$ are \textbf{diffeomorphic as trisections} if there is a diffeomorphism of $M$ such that $f(X_i)= X_i'$. Two trisections are \textbf{isotopic as trisections} if there is an isotopy, $f_t$, of $M$ such that $f_0 = id$ and $f_1(X_i)= X_i'$.
\end{definition}

In \cite{GK}, Gay and Kirby show that every closed, smooth, orientable 4-manifold admits a trisection. Additionally, they show that any two trisections of a fixed 4-manifold become isotopic after some number of a stabilization operation. The reader is referred to their paper for the details of this operation, as well as an introduction into the theory. 

In Lemma 13 of \cite{GK}, the authors also show how to obtain a handle decomposition of a 4-manifold, M, from a trisection, $M=X_1 \cup X_2 \cup X_3$, so that $X_1$ is the union of the 0- and 1-handles. From this point of view, it is clear that $X_1$ generates $\pi_1(M)$. Moreover, it follows from the definition of a trisection that one may permute the labels of the $X_i$ arbitrarily, and the result is still a trisection. As a result, we may in fact consider any of the $X_i$ to be the union of the 0- and 1-handles in some handle decomposition of $M$, so that each $X_i$ gives rise to a set of generators of $\pi_1(M)$. Using Lemmas \ref{lem:AllSpinesAreESEQ} and \ref{lem:NEareEM}, we see that a trisection gives rise to three Nielsen equivalence classes of generators. The following proposition and its proof parallel Proposition \ref{prop:HSIsoImpliesNE}. 

\begin{proposition}
\label{prop:triIsoImpliesNE}
Let $X_1 \cup X_2 \cup X_3$ and $X_1' \cup X_2' \cup X_3'$ be two $(g,k)$-trisections of $M^4$. If these trisections are isotopic, then $\mathscr{N}(X_i) = \mathscr{N}(X_i')$. If the trisections are diffeomorphic by some diffeomorphism $f$, then $f(\mathscr{N}(X_i)) = \mathscr{N}(X_i')$.
\end{proposition}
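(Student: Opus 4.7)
The plan is to mirror the proof of Proposition \ref{prop:HSIsoImpliesNE} essentially verbatim, now applied to each of the three $4$-dimensional pieces $X_i$ rather than the two $3$-dimensional pieces $H_j$. All the necessary machinery has already been assembled: Lemma \ref{lem:NEareEM} and Lemma \ref{lem:AllSpinesAreESEQ} are stated uniformly for handlebodies of dimension $n\geq 3$, and the paragraph immediately preceding the proposition explains that, after relabeling if necessary, each inclusion $X_i\hookrightarrow M^4$ realizes $X_i$ as the union of $0$- and $1$-handles in some handle decomposition of $M^4$, so that $\pi_1(X_i)\twoheadrightarrow \pi_1(M^4)$ is surjective and the class $\mathscr{N}(X_i)$ is well defined.

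For the isotopy statement, fix $i\in\{1,2,3\}$, choose a spine $S_i$ of $X_i$, and let $f_t$ denote the given ambient isotopy with $f_0=\mathrm{id}_{M^4}$ and $f_1(X_j)=X_j'$ for all $j$. Restriction gives a diffeomorphism $f_1|_{X_i}\colon X_i\to X_i'$, so $f_1(S_i)$ is a one-vertex spine of $X_i'$. By Lemma \ref{lem:AllSpinesAreESEQ}, $f_1(S_i)$ and any reference spine $S_i'$ of $X_i'$ are related by edge slides and orientation reversals, and by Lemma \ref{lem:NEareEM} the two resulting bases of the free group $\pi_1(X_i')$ are Nielsen equivalent. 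Pushing forward along $\pi_1(X_i')\to \pi_1(M^4)$, their images are Nielsen equivalent generating tuples of $\pi_1(M^4)$. Since $f_t$ starts at the identity, $(f_1)_{\ast}$ is the identity on $\pi_1(M^4)$, so the tuple read off $f_1(S_i)$ is literally the tuple read off $S_i$; chaining the two statements yields $\mathscr{N}(X_i)=\mathscr{N}(X_i')$.

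For the diffeomorphism statement, repeat the argument with the ambient isotopy replaced by the diffeomorphism $f$. The only change is that $f_{\ast}$ need no longer be trivial, so the tuple represented by $f(S_i)$ in $\pi_1(M^4)$ is the image under $f_{\ast}$ of the tuple represented by $S_i$; by definition this image is $f(\mathscr{N}(X_i))$, and the same edge-slide argument identifies it with $\mathscr{N}(X_i')$.

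There is no substantial obstacle here; the proposition is a direct translation of Proposition \ref{prop:HSIsoImpliesNE}, and the only step meriting a second look is the invocation of Lemma \ref{lem:AllSpinesAreESEQ} in the new dimension. That lemma's proof treats $n\geq 4$ separately, using the fact that homotopic loops in a high-dimensional handlebody are isotopic, so this causes no trouble.
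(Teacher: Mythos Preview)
Your proof is correct and follows essentially the same route as the paper's own argument: carry a spine of $X_i$ along the isotopy (resp.\ diffeomorphism) to get a spine of $X_i'$, then invoke Lemmas \ref{lem:AllSpinesAreESEQ} and \ref{lem:NEareEM}. Your write-up is in fact more careful than the paper's, making explicit that $(f_1)_*=\mathrm{id}$ on $\pi_1(M^4)$ in the isotopy case and unpacking the diffeomorphism case rather than leaving it as ``a similar argument.''
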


\begin{proof}
If $X_1 \cup X_2 \cup X_3$ and $X_1' \cup X_2' \cup X_3'$ are isotopic, then in particular a spine for $X_i$ is taken to a spine for $X_i'$. By Lemma \ref{lem:AllSpinesAreESEQ} these spines are related by a series of edge slides and reversals of orientations. By Lemma \ref{lem:NEareEM} this implies that the generators of $\pi_1(M^4)$ coming from the spines are Nielsen equivalent. If the splittings are homeomorphic, a similar argument applies after the application of the homeomorphism. 
\end{proof}

\section{Meier's spun trisections}
\label{sec:spinConstruction}
Let $M$ be a closed 3-manifold, and denote by $M_\circ$ the punctured manifold, obtained by removing a 3-ball from $M$. The boundary of $M_\circ \times S^1$ is $S^2 \times S^1$, which we may fill in with $S^2 \times D^2$ in two ways. Let $S(M) =  M_\circ \cup_{id} S^2 \times D^2$ be the result of capping off  $M_\circ$ with $S^2 \times D^2$ via the identity map, and let $S^*(M) = M_\circ \cup_{\tau} S^2 \times D^2$ be the result of capping off  $M_\circ$ with $S^2 \times D^2$ via the unique self homeomorphism of $S^2 \times S^1$ which does not extend across $S^2 \times D^2$. In other words, $S(M)$ and $S^*(M)$ differ by a Gluck twist about the copy of $S^2$ being attached. It is straightforward to see that $\pi_1(S(M)) = \pi_1(S^*(M)) = \pi_1(M)$.

In \cite{JM}, Meier gives a construction which, given a genus $g$ Heegaard splitting, $M = H_1 \cup_\Sigma H_2$, produces $(3g,g)$-trisections, $S(\Sigma)$ and $S^*(\Sigma)$, of $S(M)$ and $S^*(M)$ respectively. Shortly thereafter, Hayano \cite{KH} showed that $S(\Sigma)$ and $S^*(\Sigma)$ are simplified trisections, as defined by Baykur and Saeki in \cite{BS}. We will briefly sketch the construction, and refer the reader to the proof of Theorem 1.2 of \cite{JM} for a more in depth treatment. 

Begin with a genus $g$ Heegaard splitting, $M=H_1 \cup_\Sigma H_2$. This splitting can be used to define a Morse function, $f:M \to [0,2]$, with a unique index 0 critical point with critical value $0$, $g$ index 1 critical points which take distinct values in $(0,1)$, $g$ index 2 critical points which take distinct values in $(1,2)$, and a unique index 3 critical point with critical value $2$. Such a function can be taken such that $H_1 = f^{-1}([0,1])$ and $H_2 = f^{-1}([1,2])$.

Next, remove the 3-handle corresponding to the index 3 critical point in order to puncture $M$ in $H_2$, and take the product with $S^1$ to obtain $M_\circ \times S^1$. If we parameterize a disk of radius $2$ using polar coordinates, we obtain a generic smooth function $\tilde{f}: M_\circ \times S^1 \to D^2$ defined by $\tilde{f}(x,\theta) = (2-f(x), \theta)$. Here, we take $2-f(x)$ in the first coordinate so that the ``missing" piece of $M$ is located in the center of the disk. One can then fill in the boundary component of $M_\circ \times S^1$ with $S^2 \times D^2$ and extend $\tilde{f}$ across the inner disk of $D^2$ in the obvious way. At this point, each index $i$ critical point of $f$ gives rise to an index $i$ critical fold of $\tilde{f}$ as we move from the outer edge of the disk towards the center. We can use a sequence of always realizable moves on the index $2$ critical folds coming from $H_2$ in order to ``flip'' them so that all folds are of index 1 when moving towards the center of the disk. This turns each index $2$ critical fold into an index $1$ critical fold with $6$ cusps.
 
After these moves, we can trisect the disk in the obvious way to obtain a $(3g,g)$-trisection of $S(M)$. An important observation made in \cite{JM} is that, throughout this process, $\tilde{f} (H_1 \times S^1)$ is left unaltered. The gluing takes place away from this region, and the modifications to the Morse 2-function all happen on the folds coming from the critical points of $H_2$. In addition, $H_1$ was left unaltered when passing from $M$ to $M_\circ$. Therefore, the Morse 2-function for $S(\Sigma)$ can be decomposed into two pieces; one of which is  $H_1 \times S^1$, and the other of which is the spin of $H_2$. These two pieces meet along an embedded copy of $\Sigma \times S^1$. This decomposition is shown in Figure \ref{fig:spinMorse2}.

\begin{figure}

\begin{center}
\includegraphics[scale=.55]{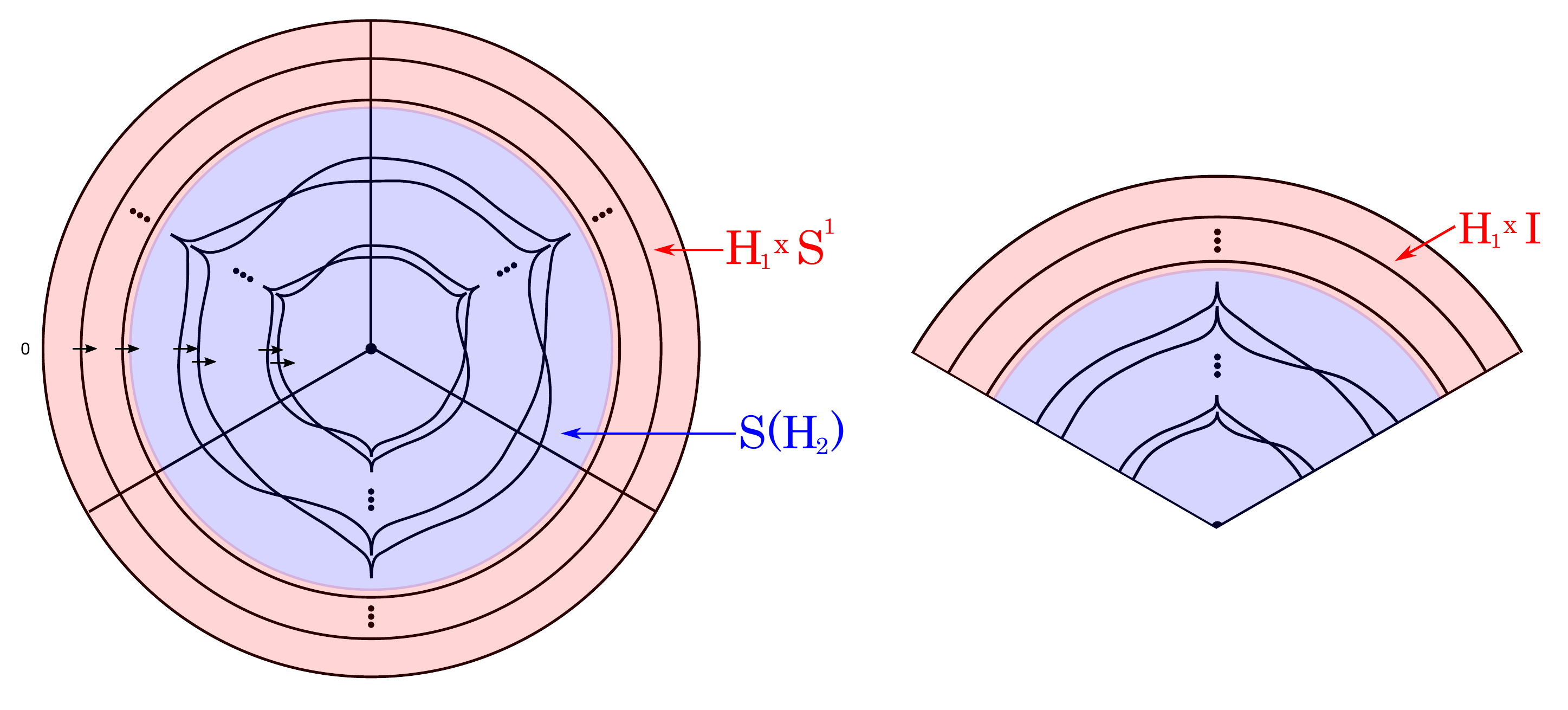}
\end{center}
\caption{Left: The trisected Morse 2 function for $S(\Sigma)$. Right: The portion without cusps of $X_i$ can be identified with $H_1 \times I$}
\label{fig:spinMorse2}
\end{figure}

\section{Proof of the main theorem}
Throughout this section, $M = H_1 \cup_\Sigma H_2$ will be a Heegaard splitting, and $S(\Sigma) = X_1 \cup X_2 \cup X_3$ will be the trisection of $S(M)$ constructed in Section \ref{sec:spinConstruction}. By the decomposition of the Morse 2-function noted in the previous section, we have an $S^1$ parameterized family of embeddings $f_\theta:H_1 \to S(M)$. Let $f_{[a]} = f_{a}(H_1) \subset S(M)$ and $f_{[a, b]}$ be $\cup_{i=a}^b f_{[i]}$. We parameterize $\theta$ such that $\theta \in [0,1)$, $f_{[0, \frac{1}{3}]} \subset X_1$, $f_{[\frac{1}{3}, \frac{2}{3}]} \subset X_2$, $f_{[\frac{2}{3},1) \cup \{0\}]} \subset X_3$.

\begin{lemma}
\label{lem:spineOfHSisSpineOfTri}
Let $H_1 \cup_\Sigma H_2$ be a genus $g$ Heegaard splitting of a closed 3-manifold $M$ and let $S(\Sigma) = X_1 \cup X_2 \cup X_3$ be the $(3g,g)$-trisection constructed in Section \ref{sec:spinConstruction}.  Then if $S$ is a spine for $H_1$, $f_{\frac{1}{6}}(S)$ is a spine for $X_1$, $f_{\frac{1}{2}}(S)$ is a spine for $X_1$, and $f_{\frac{5}{6}}(S)$ is a spine for $X_3$, 
\end{lemma}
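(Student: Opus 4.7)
The plan is to exhibit, for each choice of $i \in \{1,2,3\}$ with its corresponding parameter $\theta \in \{1/6,\, 1/2,\, 5/6\}$, a deformation retract of $X_i$ onto $f_\theta(S)$; this establishes $f_\theta(S)$ as a spine of $X_i$. I would build this deformation retract in stages, using the structural decomposition of the Morse 2-function $\tilde f$ pictured on the right of Figure~\ref{fig:spinMorse2}.

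By that decomposition, $X_i$ splits as $X_i = (H_1 \times I_i) \cup Y_i$, where $I_i \subset S^1$ is the closed arc parameterizing the sector of the disk corresponding to $X_i$, the piece $H_1 \times I_i$ is the ``portion without cusps,'' and $Y_i$ is the cuspidal portion coming from the spun $H_2$ component; the two pieces meet along $\Sigma \times I_i$. The proof then unfolds as a composition of deformation retracts $X_i \searrow H_1 \times I_i \searrow H_1 \times \{\theta\} \searrow f_\theta(S)$. The middle arrow is the straight-line retract of $I_i$ onto an interior point $\{\theta\}$, and the rightmost is induced by the hypothesis that $S$ is a spine of $H_1$; both are immediate. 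Composing yields a deformation retract of $X_i$ onto the wedge of $g$ circles $f_\theta(S)$, which (since $X_i \cong H_g^4$) is by definition a spine.

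The main obstacle is the first arrow, $X_i \searrow H_1 \times I_i$. The piece $Y_i$ arises from the flipping procedure applied to the index-$2$ folds of $H_2$: each such fold becomes an index-$1$ fold with six cusps, and the portions of these cuspidal folds that lie in $X_i$ comprise $Y_i$. I would establish this retract by examining the explicit local model for the cuspidal folds in $S(M)$ (as in \cite{JM}, with Hayano's refinement in \cite{KH}), together with the identification of $Y_i$ as a sub-handlebody of the spin of $H_2$, in order to exhibit a collapse of $Y_i$ onto its interface $\Sigma \times I_i$ with the non-cuspidal portion.

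A variant that bypasses writing the retract explicitly is to check instead that the inclusion $f_\theta(S) \hookrightarrow X_i$ induces an isomorphism on $\pi_1$, which reduces to matching handle structures: the $g$ one-handles of $X_i$ in the Gay--Kirby handle decomposition of $S(M)$ coming from the trisection correspond to the $g$ one-handles of $H_1$ in the original Morse function. Once the $\pi_1$ isomorphism is known, $f_\theta(S)$ is a wedge of $g$ circles realizing a basis of $\pi_1(X_i) = F_g$, and the argument from the dimension-$\geq 4$ case of Lemma~\ref{lem:AllSpinesAreESEQ} applies verbatim: any chosen spine $S'$ of $X_i$ can be moved by edge slides and orientation reversals to a spine $S''$ with the same homotopy classes as $f_\theta(S)$, and since homotopic embedded loops in a four-manifold are ambient isotopic by general position, $S''$ is isotopic to $f_\theta(S)$, showing that the latter is itself a spine.
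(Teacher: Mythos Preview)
Your proposal is correct and follows the same route as the paper: split $X_i$ into the non-cuspidal piece $\tilde X_i = H_1 \times I_i$ and the cuspidal piece $\tilde X_i^c = Y_i$, then compose the retracts $X_i \searrow H_1 \times I_i \searrow f_\theta(H_1) \searrow f_\theta(S)$. For the first arrow the paper's justification is the single observation that, since every fold of $\tilde X_i^c$ in the sector is cusped, $\tilde X_i^c$ is a collar on $\tilde X_i$ --- this is precisely the conclusion your local-model inspection would reach, so the variant via $\pi_1$ is unnecessary.
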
 

\begin{proof}
We will show that $f_{\frac{1}{6}}(S)$ is a spine for $X_1$, as the other two claims have identical proofs. $X_1$ consists of two pieces: one of the pieces is one third of $H_1 \times S^1$, which we will call $\tilde{X}_1$, and the other is one third of $S(H_2)$, which we will call $\tilde{X}_1^c$ (see Figure \ref{fig:spinMorse2}). The critical folds of $\tilde{X}_1^c$ are all cusped and so $\tilde{X}_1^c$ is a collar on $\tilde{X}_1$. Therefore, there is a deformation retraction of $X_1$, as a subset of $S(M)$, onto $\tilde{X_1}$. $\tilde{X_1}$, in turn, can be viewed as $f_{[0, \frac{1}{3}]}$ which can be identified with $f_{\frac{1}{6}} \times [-\frac{1}{6},\frac{1}{6}]$ using $f_\theta$. We therefore conclude that $\tilde{X_1}$ retracts onto $f_{\frac{1}{6}}(H_1)$. This is an embedded copy of $H_1$, and so we may compose the retraction of $H_1$ onto $S$ with $f_{\frac{1}{6}}$ to obtain a retraction from $f_{\frac{1}{6}}(H_1)$ onto $f_{\frac{1}{6}}(S)$. After composing all of these retractions, we obtain that $X_1$ retracts onto $f_{\frac{1}{6}}(S)$.
\end{proof}

The previous lemma shows us how to obtain spines for the $X_i$ from a spine for $H_1$. Since we seek to eventually distinguish trisections by the Nielsen classes of their spines, we must determine how these spines generate the fundamental group of $S(M)$. The following lemma shows that the generators coming from the spines for the $X_i$ are essentially the same as the generators coming from a spine for $H_1$.

\begin{lemma}
\label{lem:spinIsANE}
Let $S$ be a spine for $H_1$, and let $P$ be the presentation for $\pi_1(M)$ given by $\langle s_1,  ... ,s_g |R \rangle$, where the $s_i$ are the homotopy classes of the loops which make up $S$, and $R$ is a set of relations induced by $H_2$. Then for any $\theta_\circ \in [0,1)$, the set of generators given by the loops of $f_{\theta_\circ}(S)$ induce the same presentation for $\pi_1(S(M))$.
\end{lemma}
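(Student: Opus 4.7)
The plan is to reduce to the case $\theta_\circ = 0$ using the $S^1$-family of embeddings $f_\theta$, and then to invoke van Kampen's theorem in order to match up generators and relations.

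First I would note that, by the Morse $2$-function decomposition at the end of Section \ref{sec:spinConstruction}, the family $\{f_\theta\}$ is given by the product inclusion $H_1 \times S^1 \hookrightarrow M_\circ \times S^1 \hookrightarrow S(M)$, with $f_\theta(x) = (x,\theta)$. Rotating the $S^1$-coordinate then yields an ambient isotopy of $S(M)$ from $f_{\theta_\circ}$ to $f_0$, carrying each loop of $f_{\theta_\circ}(S)$ to the corresponding loop of $f_0(S) = S \subset H_1 \subset M_\circ \subset S(M)$. After choosing a path in $H_1 \times S^1$ from a fixed basepoint of $S(M)$ to $f_{\theta_\circ}(\text{vertex})$, the loops of $f_{\theta_\circ}(S)$ and $f_0(S)$ will represent the same elements of $\pi_1(S(M))$; a different choice of such path simultaneously conjugates all generators, which does not alter the presentation $\langle s_i \mid R \rangle$. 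Thus it suffices to treat $\theta_\circ = 0$.

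Next I would apply van Kampen's theorem to $S(M) = (M_\circ \times S^1) \cup_{S^2 \times S^1} (S^2 \times D^2)$. Since $\pi_1(S^2 \times D^2) = 1$ and the $S^1$-generator of $\pi_1(S^2 \times S^1)$ becomes trivial in $\pi_1(S^2 \times D^2)$, the inclusion $M_\circ \hookrightarrow M_\circ \times \{0\} \hookrightarrow S(M)$ induces an isomorphism $\pi_1(M) \xrightarrow{\cong} \pi_1(S(M))$. Under this isomorphism, the loop of $f_0(S)$ corresponding to $s_i$ maps to the generator $s_i$ of $\pi_1(M)$. Moreover, the meridian disks of $H_2$ giving the relations $R$ can be chosen to lie in $M_\circ \subset S(M)$, so each relation in $R$ continues to hold in $\pi_1(S(M))$, and the collection is complete because the induced map on $\pi_1$ is an isomorphism. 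Combining the two steps, $f_{\theta_\circ}(S)$ induces the presentation $P = \langle s_1, \ldots, s_g \mid R \rangle$ for $\pi_1(S(M))$.

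The main subtlety will be bookkeeping with basepoints and being explicit about how the embedding $H_1 \times S^1 \hookrightarrow S(M)$ coming from Meier's construction is compatible with the natural inclusion $M_\circ \hookrightarrow S(M)$ used in the van Kampen argument. Once these identifications are set up clearly, the rest is essentially a diagram chase.
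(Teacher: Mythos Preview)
Your proposal is correct and follows essentially the same approach as the paper: both arguments factor $f_{\theta_\circ}$ through the inclusions $H_1 \hookrightarrow M_\circ \hookrightarrow M_\circ \times S^1 \hookrightarrow S(M)$ and track that each loop $s_i$ is sent to the generator $s_i$ under the identification $\pi_1(S(M)) \cong \pi_1(M)$. The only organizational difference is that the paper handles an arbitrary $\theta_\circ$ directly by taking the inclusion $j(x)=(x,\theta_\circ)$ at that angle, whereas you first rotate to $\theta_\circ=0$ and then run the (equivalent) van~Kampen computation there; your reduction step is harmless but unnecessary.
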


\begin{proof}
We will break down each step of the construction of $S(M)$ and track presentations of the fundamental groups along the way. The first step is to puncture $M$ in $H_2$ to obtain $M_\circ$. There is a natural inclusion $i: M_\circ \hookrightarrow M$. This induces an isomorphism, $i_*$, on fundamental groups, and so there is an inverse $i_*^{-1}: \pi_1(M) \to \pi_1(M_\circ)$. Since $S$ is left unchanged, and generates $\pi_1(M_\circ)$, we may associate $\pi_1(M_\circ)$ with $P$ so that $i_*^{-1}(s_i) = s_i$. Also, since $H_1$ is unaltered in this process, the restriction $i|_{H_1}$ is a homeomorphism onto its image, so it has an inverse, $i|_{H_1}^{-1}$.

Next, parameterize $S^1$ by  $\theta \in [0,1)$. Fix an angle $\theta_\circ \in [0,1)$ and denote by $j: M_\circ \hookrightarrow M_\circ \times S^1$ the inclusion given by $j(x) = (x,\theta_\circ)$. We may associate $\pi_1(M_\circ \times S^1)$ with the presentation of $\pi_1(M) \times \mathbb{Z}$ given by  $\langle s_1, s_2, ... ,s_g, z |R, zs_i = s_iz \rangle$ so that $j_*(s_i) = s_i$. Finally, we have an inclusion map $k: M_\circ \times S^1 \hookrightarrow S(M)$. Here, $k_*$ induces the projection map $\pi_1(M) \times \mathbb{Z} \to \pi_1(M)$.  We may associate $\pi_1(S(M))$ with $P$ so that $k_*(s_i, z) = s_i$.

Now note that $k \circ j \circ i|_{H_1}^{-1} = f_\theta$. Moreover we have arranged matters so that if we identify both $\pi_1(M)$ and $\pi_1(S(M))$ with the presentation $P$, then $f_*(s_i) = s_i$, as desired.

\end{proof}

As far as the group theory is concerned, we now have a complete understanding of the spines of the $X_i$ in relation to a spine for $H_1$. This understanding, together with Proposition \ref{prop:triIsoImpliesNE} are the main inputs used to prove the main theorem.

\mainTheorem

\begin{proof}
By an isotopy of $\Sigma$, we may arrange so that $H_2 \cap H_2' \neq \emptyset$. We may then use the spin construction to obtain two trisections of $S(M)$, given by $S(\Sigma) = X_1 \cup X_2 \cup X_3$ and $S(\Sigma')= X_1' \cup X_2' \cup X_3'$, making sure to puncture $M$ in $H_2 \cap H_2'$ such that the removed ball intersects neither $H_1$ nor $H_1'$. Let $S$ and $S'$ be spines of $H_1$ and $H_1'$ respectively, and let $f_\theta$ and $g_\theta$ be the $S^1$ parameterized families of embeddings of $H_1$ and $H_1'$, respectively, into $S(M)$. 

By Lemma \ref{lem:spineOfHSisSpineOfTri}, $f_{\frac{1}{6}}(S)$ is a spine for $X_1$, and $g_{\frac{1}{6}}(S')$ is a spine for $X_1'$. By Lemma \ref{lem:spinIsANE}, we may identify the fundamental groups of $M$ and $S(M)$ such that these spines for $X_1$ and $X_1'$ induce the same sets of generators as the spines for $H_1$ and $H_1'$, respectively. Therefore, $\mathscr{N}(H_1) = \mathscr{N}(X_1)$ and $\mathscr{N}(H_1') = \mathscr{N}(X_1')$. By assumption,  $\mathscr{N}(H_1) \neq \mathscr{N}(H_1')$ so that $\mathscr{N}(X_1) \neq \mathscr{N}(X_1')$. By Proposition \ref{prop:triIsoImpliesNE}, $S(\Sigma)$ and $S(\Sigma')$ are not isotopic. 

Similarly, let us suppose towards a contradiction that $S(\Sigma)$ and $S(\Sigma')$ are diffeomorphic by some diffeomorphism $h$. Then, by the second part of Proposition \ref{prop:triIsoImpliesNE}, $h(\mathscr{N}(X_1)) = \mathscr{N}(X_1')$. But since $\mathscr{N}(H_1) = \mathscr{N}(X_1)$ and $\mathscr{N}(H_1') = \mathscr{N}(X_1')$ this implies that $h(\mathscr{N}(H_1)) = \mathscr{N}(H_1')$. This contradicts the assumption that no such $h$ exists.

\end{proof}

\section{Heegaard splittings of Seifert fiber spaces}
\subsection{Vertical Heegaard splittings and non-isotopic trisections}
In this section, we will restrict ourselves to orientable Seifert fiber spaces with orientable base spaces, which are sometimes referred to as fully orientable Seifert fiber spaces in the literature. Let $S(g, e;(\alpha_1,\beta_1), (\alpha_2,\beta_2), ... , (\alpha_r,\beta_r))$ denote the unique fully orientable Seifert fiber space with a genus $g$ base surface, Euler class $e$ (as an $S^1$ bundle), and $r$ exceptional fibers of type $\frac{\beta_i}{\alpha_i}$. 

Heegaard splittings of Seifert fiber spaces provide a rich set of examples which may be distinguished using Nielsen classes. In \cite{MS}, it is shown that irreducible Heegaard splittings of Seifert fiber spaces are either horizontal or vertical. Vertical heegaard splittings are well distinguished by the Neilsen classes they induce. On the other hand, it was shown in \cite{JJ2} that there are Seifert fiber spaces which admit infinitely non-isotopic horizontal Heegaard splittings, which nevertheless induce Nielsen equivalent generating sets. Due to the nature of this paper, we focus our attention on vertical Heegaard splittings.

To construct a vertical Heegaard splitting, we start by describing a graph in a Seifert fiber space  with $r$ exceptional fibers. The reader is encouraged to follow Figure \ref{fig:VerticalHeegaardSplittingGenus2} which contains an example of such a graph. Let $\Sigma$ be the base surface and $e_1,..., e_r$ be the images of the exceptional fibers, $f_1,..., f_r$ on $\Sigma$.  Choose a base point, $p$, on $\Sigma$ which is the image of a regular fiber.  Choose some non-empty, proper subset of indices  $\{ i_1,...,i_j \} \subset \{ 1,...,r \}$, and let $\sigma_{i_k}$ be an arc based at $p$ which joins $p$ to $e_{i_k}$. Let $\{ m_1,...,m_{r-j} \} = \{ 1,...,r \}\backslash \{ i_1,...,i_j \}$, be the complementary set to $\{ i_1,...,i_j \}$ and  let $q_{m_k}$ be a loop based at $p$ which winds around $e_{m_k}$ once. Finally, let $a_1, b_1, ... ,a_g,b_g$ be the usual collection of curves based at $p$ which cut $\Sigma$ into a disk. Choose all curves so that they are disjoint, except for at $p$. Let $\Gamma(i_1,...,i_j)$ be the graph consisting of $a_1,b_1,...,a_g,b_g, \sigma_{i_1},f_{i_1},...,\sigma_{i_j},f_{i_j}, q_{m_2},..., q_{m_{r-j}}$. Note that $q_{m_1}$ was excluded, this was arbitrarily chosen and any $q_{m_k}$ may be excluded. 

We define $\bm{H_1(i_1,...,i_j)}$ to be a tubular neighborhood of $\Gamma(i_1,...,i_j)$ in $S(g, e;(\alpha_1,\beta_1), ... , (\alpha_r,\beta_r))$. This is clearly a handlebody of genus $2g+m-1$. Let $\bm{H_2(i_1,...,i_j)}$ be the closure of the complement of $H_1(i_1,...,i_j)$. It is well known that $H_2(i_1,...,i_j)$ also a handlebody. We refer the reader to section 2 of \cite{LM} for a proof of this fact. Moreover, in Remarks 2.1, 2.2, and 2.3 of \cite{LM}, it is argued that the isotopy class of this Heegaard splitting only depends on the choices of $\{ i_1,...,i_j \}$ and that complementary choices of sets give the same Heegaard splitting. We therefore denote by $\bm{\Sigma(i_1,...,i_j)}$ the Heegaard splitting described above. Any Heegaard splitting obtained by the above process is called a \textbf{vertical Heegaard splitting}. A straightforward counting argument shows that $S(g, e;(\alpha_1,\beta_1), ... , (\alpha_r,\beta_r))$ has at most $2^{r-1}-1$ distinct vertical Heegaard splittings of genus $2g+m-1$. The following is a special case of Theorem 2.8 in \cite{LM} well suited for our purposes.

\begin{figure}
\label{fig:VerticalHeegaardSplittingGenus2}
\begin{center}
\includegraphics[scale=.35]{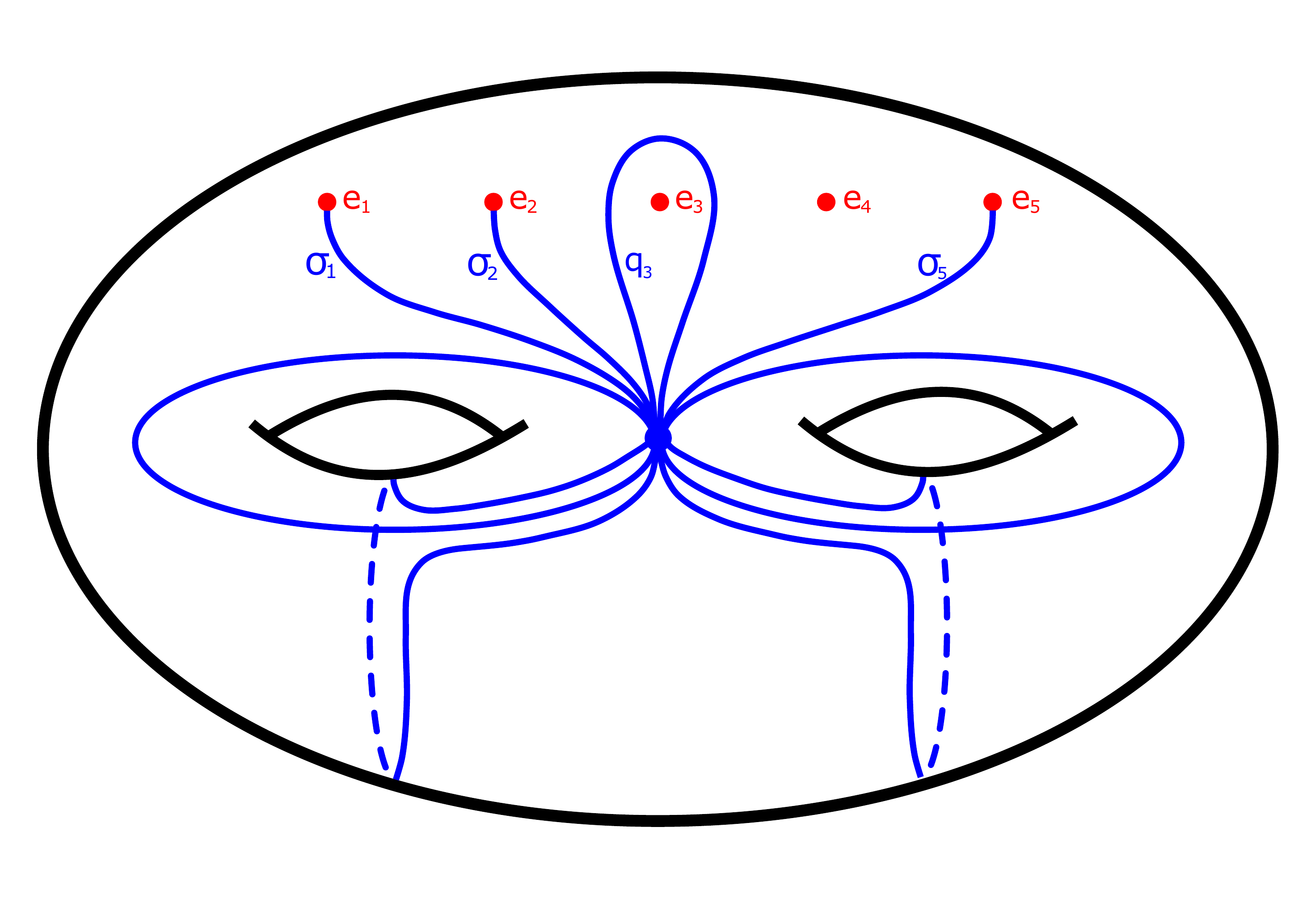}
\end{center}

\caption{The graph describing the vertical Heegaard Splitting $\Sigma(1,2,5)$. Note that the exceptional fibers $f_1$, $f_2$, and $f_5$ are included in this graph, but are not pictured.}
\end{figure}

\begin{theorem}

\label{thm:nonEqivNielsenClasses}

Let M be a Seifert fiber space with Seifert invariants $S(g, e;(\alpha_1,\beta_1), ... , (\alpha_r,\beta_r))$ satisfying the following conditions:
\begin{itemize}
\item $g>0$ and $r>0$, or $r \geq 3$ 
\item $\beta_i \not\equiv \pm 1 (mod\, \alpha_i)$ for all $i \in \{1,...,r \}$
\item All of the $\alpha_i$ are odd, pairwise relatively prime, and pairwise distinct.
\end{itemize}
Then for $k \in \{1,2 \}$, $\mathscr{N}(H_k(i_1,...,i_j)) = \mathscr{N}(H_k(k_1,...,k_j))$ if and only if the sets $\{ i_1, ... , i_j \}$ and $\{ k_1, ... , k_j \}$ are equal or complementary to each other in $\{1,2,...,m \}$
\end{theorem}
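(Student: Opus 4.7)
The ``if'' direction follows directly from the explicit analysis of vertical Heegaard splittings in \cite{LM}: equal subsets trivially produce identical Heegaard splittings, and for complementary subsets (which forces $r = 2j$) the construction yields the same Heegaard splitting up to interchanging $H_1$ and $H_2$, so that the symmetry between the two types of spine-generators matches the Nielsen classes of $H_k$ for each $k$.

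For the ``only if'' direction, my plan is to transport the Nielsen-equivalence question into a Fuchsian-group quotient where Lustig--Moriah's classification of generating tuples applies. Let $\rho : \pi_1(M) \twoheadrightarrow \Gamma$ be the quotient by the central cyclic subgroup $\langle h \rangle$ generated by the regular fiber. Then $\Gamma$ is the orbifold fundamental group of the base, with presentation
\[
\Gamma \cong \langle a_1, \ldots, b_g, q_1, \ldots, q_r \mid q_i^{\alpha_i}, \; \textstyle\prod_{j}[a_j, b_j]\prod_{i} q_i \rangle,
\]
which under the hypotheses is a cocompact Fuchsian group whose torsion is entirely concentrated in conjugates of the cyclic stabilizers $\langle q_i \rangle$, of pairwise distinct orders $\alpha_i$. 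Each spine-loop $\widetilde{f}_{i_l} = \sigma_{i_l} f_{i_l} \sigma_{i_l}^{-1}$ descends via $\rho$ into a conjugate of $\langle q_{i_l} \rangle$, using the Seifert relation $f_i^{\alpha_i} = h$ valid in the $i$-th solid-torus neighborhood, while each $q_{m_l}$ is already a generator of $\langle q_{m_l} \rangle$. Since Nielsen equivalence is preserved under surjective homomorphisms, it suffices to prove non-Nielsen-equivalence of the two image tuples in $\Gamma$.

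The core technical step is the Nielsen-classification result for cocompact Fuchsian groups that underlies \cite[Theorem~2.8]{LM}: under the odd, pairwise-coprime, pairwise-distinct hypothesis on the $\alpha_i$ together with $\beta_i \not\equiv \pm 1 \pmod{\alpha_i}$, a generating tuple consisting of the standard surface generators together with one torsion element in each of all but one cone-point stabilizer determines---up to the choice of the omitted stabilizer---the subset of ``visible'' stabilizers as a Nielsen invariant. Reading this invariant off for the two image tuples, and matching them while accounting for the free choice of $m_1$ on each side, forces $\{i_1,\ldots,i_j\}$ and $\{k_1,\ldots,k_j\}$ to be equal or complementary. The principal obstacle is the imported Fuchsian classification itself: its proof in \cite{LM} hinges on a careful analysis of how Nielsen transformations can (and cannot) permute torsion elements between distinct stabilizers, and the number-theoretic hypotheses on $(\alpha_i, \beta_i)$ are precisely what is needed to rule out the sporadic moves that would identify generators from different stabilizers or fold a generator onto a power of itself arising from a twin stabilizer.
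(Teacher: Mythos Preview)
The paper does not supply its own proof of this theorem: it is quoted verbatim as ``a special case of Theorem~2.8 in \cite{LM}'' and used as a black box. Your proposal is therefore not competing against a proof in the paper but rather sketching the argument that the cited reference carries out. In that respect your outline is accurate: Lustig--Moriah do indeed pass to the Fuchsian quotient $\pi_1(M)/\langle h\rangle$ and analyse how Nielsen moves interact with the torsion generators coming from the cone points, with the arithmetic conditions on the $\alpha_i,\beta_i$ ruling out accidental coincidences between stabilizers.

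One small correction on the ``if'' direction: the paper (following Remarks~2.1--2.3 of \cite{LM}) records that complementary index sets give the \emph{same} Heegaard splitting, not merely the same splitting with $H_1$ and $H_2$ interchanged. Your phrasing would only yield $\mathscr{N}(H_1(i_1,\ldots,i_j)) = \mathscr{N}(H_2(\text{complement}))$, whereas the theorem asserts equality of $\mathscr{N}(H_k)$ for the same $k$ on both sides; the stronger statement that the splittings themselves coincide is what is needed, and it is what \cite{LM} actually proves.
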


Under the conditions of the theorem above, we conclude that $S(g, e;(\alpha_1,\beta_1), ... , (\alpha_r,\beta_r))$ has exactly $2^{r-1}-1$ distinct vertical Heegaard splittings. Moreover, case (b) of Theorem 1.1 of \cite{BZ} states that the Heegaard genus of these manifolds is the same as the rank of their fundamental group, and both are equal to $2g+m-1$. Applying Meier's spin construction to the Heegaard splitting $\Sigma(i_1,...,i_j)$ therefore produces a $(3(2g+m-1), (2g+m-1))$-trisection of $S(\Sigma(i_1,...,i_j))$. By fundamental group considerations, we see that these are minimal genus trisections of  $S(\Sigma(i_1,...i_j))$. We distill the results stated above in the following corollary.

\nonIsoTri

\subsection{Non-diffeomorphic trisections}
We next turn our attention to diffeomorphism classes of trisections. We seek to distinguish two trisections up to diffeomorphism by the generating sets of the fundamental group they induce. Here, Proposition \ref{prop:triIsoImpliesNE} tells us that, in addition to the Nielsen transformations, we must also consider the effect of applying an arbitrary automorphism of the group to one of the sets of generators. We continue to consider the fundamental groups of a Seifert fiber space. It is well known that the fundamental group of a Seifert fiber space modulo its center is a Fuchsian group. Since Fuchsian groups are easier to work with in general, we seek to pass to this quotient.

One can check that images of Nielsen classes are well defined when passing to a quotient, and that if two systems are Nielsen equivalent, their images remain equivalent in a quotient. Therefore, given a group $H$, and two Nielsen classes of $H$, $M$ and $N$, we obtain two Nielsen classes of the group modulo its center, $[M]$ and $[N]$, and if $[M] \neq [N]$, $M \neq N$. Recall that any automorphism of $H$ descends to the quotient of $H$ by its center, $Z(H)$. Therefore, given an automorphism, $f$, of $H$, we additionally obtain well defined Nielsen classes of $H / Z(H)$, $f([M])$ and $f([N])$, and if $f([M]) \neq [N]$, then $f(M) \neq N$. The following theorem of Lustig, Moriah, and Rosenberger in \cite{LMR} is therefore pertinent.

\begin{theorem}
\label{thm:fuchsianLift}
Let $m \geq 2$ be an integer and let $\alpha_i$ be a set of $m$ pairwise distinct integers. Let G be the Fuchsian group given by the presentation $$\langle s_1, ... ,s_n, a_1, b_1, ... , a_g, b_g | s_i^{\alpha_i}, s_1s_2...s_n\Pi[a_i,b_i] \rangle$$

Consider the generating system given by $\{ x_1 = s_1^{\beta_1},..., x_{k-1}^{\beta_{k-1}}, x_{k+1}^{\beta_{k+1}},...,x_n = s_n^{\beta_n}, a_1,b_1,...a_g,b_g \}$ where $(\alpha_i, \beta_i)=1$. Then any automorphism $h: G \to G$ is induced by some automorphism of the free group $F_{2g+m-1} = F[X_1,...,X_{k-1}, X_{k-1} ,..., X_n, A_1, B_1, ... ,A_g, B_g]$ with respect to the surjection $F_{2g+m-1} \to G$ given by $X_i \mapsto x_i$, $A_i \mapsto a_i$, $B_i \mapsto b_i$.
\end{theorem}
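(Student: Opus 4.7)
The plan is to combine the classical structure theory of automorphisms of Fuchsian groups with a case-by-case lifting of the generators of $\mathrm{Aut}(G)$. The group $G$ is the orbifold fundamental group of a closed, orientable $2$-orbifold $O$ of genus $g$ with $m$ cone points of pairwise distinct orders $\alpha_1, \ldots, \alpha_m$. Because these orders are pairwise distinct, any $h \in \mathrm{Aut}(G)$ must fix the conjugacy class of each $s_i$ setwise, and hence send $s_i$ to a conjugate of a power $s_i^{\gamma_i}$ with $\gcd(\gamma_i, \alpha_i) = 1$. The classical work of Zieschang and collaborators then shows that, modulo inner automorphisms, $\mathrm{Aut}(G)$ is generated by (i) automorphisms induced by Dehn twists along essential simple closed curves in $O$, and (ii) a finite family of ``power automorphisms'' of the form $s_i \mapsto s_i^{\gamma_i}$ that are compatible with the long relation.

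First I would dispatch the easy cases. Inner automorphisms lift to inner automorphisms of $F_{2g+m-1}$ by choosing any preimage of the conjugating element. For each Dehn twist, the action on the standard generators is a specific word substitution built from Nielsen transformations (possibly composed with conjugations by the $x_i$), so it is manifestly induced by an automorphism of the free group. Since $\gcd(\beta_i, \alpha_i) = 1$, the element $x_i = s_i^{\beta_i}$ generates the same cyclic subgroup of $G$ as $s_i$, so any power automorphism of $G$ acts on $x_i$ by $x_i \mapsto (\mathrm{conj})\, x_i^{\gamma_i}$. This reduces the problem to lifting the action on the generating set $\{x_i, a_j, b_j\}$ indexed away from $k$.

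The main obstacle is the power automorphisms. The naive attempt $X_i \mapsto X_i^{\gamma_i}$ is an endomorphism but not an automorphism of $F_{2g+m-1}$, so a direct ``raise to the $\gamma_i$'' lift fails. The role of the missing index $k$ is to provide the necessary slack: the relation
\[
s_k \;=\; \bigl(s_1 \cdots s_{k-1}\, s_{k+1} \cdots s_m \, \textstyle\prod_{j}[a_j, b_j]\bigr)^{-1}
\]
holds in $G$ but has no counterpart in $F_{2g+m-1}$, so one may modify the tentative image of each $X_i$ by a free-group word whose projection into $G$ is trivial, trading a problematic power of $X_i$ for a cancellable product of the remaining generators. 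The technical heart of the argument is to exhibit, for each power automorphism, an explicit such modification that upgrades the endomorphism to an automorphism of $F_{2g+m-1}$, using the coprimality $\gcd(\gamma_i, \alpha_i) = 1$ (and the distinctness of the $\alpha_i$, which prevents index-permuting complications) to ensure the resulting substitution is invertible. Once every generator of $\mathrm{Aut}(G)$ has been individually lifted, composing the lifts realizes an arbitrary automorphism $h$ as required.
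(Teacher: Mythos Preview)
The paper does not prove this theorem at all: it is quoted verbatim as a result of Lustig, Moriah, and Rosenberger \cite{LMR}, and is used as a black box. There is therefore no ``paper's own proof'' to compare your attempt against.

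That said, a few remarks on your plan and on how it relates to the argument in \cite{LMR} as indicated by the surrounding text. Immediately after stating the theorem, the paper invokes Rosenberger's classification (\cite{GR}, Satz~2.2): every minimal generating system of such a Fuchsian group is Nielsen equivalent to one of the displayed form. The way \cite{LMR} establishes the lifting is essentially through this classification --- one shows that $\{h(x_i), h(a_j), h(b_j)\}$ is again a minimal generating system, places it in Rosenberger's normal form, and reads off the Nielsen equivalence (which is exactly the desired free-group automorphism). Your route is different: you try to produce explicit generators of $\mathrm{Aut}(G)$ and lift each one by hand.

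Your plan has two soft spots. First, your inventory of $\mathrm{Aut}(G)$ is imprecise. For a cocompact Fuchsian group with pairwise distinct cone orders, Dehn--Nielsen--Baer for orbifolds identifies $\mathrm{Out}(G)$ with the orbifold mapping class group; geometric automorphisms send $s_i$ to a conjugate of $s_i^{\pm 1}$, not to $s_i^{\gamma_i}$ for arbitrary $\gamma_i$ coprime to $\alpha_i$. So the ``finite family of power automorphisms'' you posit is largely illusory, and the hard case you isolate (lifting $X_i \mapsto X_i^{\gamma_i}$ for general $\gamma_i$) does not actually arise. Second, even for the cases that do arise, the substance of your argument is deferred to ``the technical heart \ldots\ is to exhibit an explicit such modification,'' which you do not carry out. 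Lifting Dehn twists through the nonstandard basis $X_i \mapsto s_i^{\beta_i}$ (rather than $X_i \mapsto s_i$) already requires care: a twist that conjugates $s_i$ by some $w$ sends $x_i$ to $w x_i w^{-1}$, and you must express $w$ as a word in the $x_j, a_j, b_j$ and check that the resulting substitution on the $X_i, A_j, B_j$ is genuinely an automorphism of $F_{2g+m-1}$, not merely an endomorphism inducing the right map on $G$. None of this is impossible, but none of it is done.
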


By a theorem of Rosenberger (\cite{GR}, Satz 2.2), all generating systems of a Fuchsian group are Nielsen equivalent to one of the form described in Theorem \ref{thm:fuchsianLift} above. It follows that two systems in a Fuchsian group satisfying the hypothesis of Theorem \ref{thm:fuchsianLift} are Nielsen equivalent after an automorphism if and only if they are Nielsen equivalent. By Proposition \ref{prop:triIsoImpliesNE}, we see that all of the examples of non-isotopic trisections constructed as a corollary of Theorem \ref{thm:nonEqivNielsenClasses} are in fact non-diffeomorphic. We summarize these results in the following corollary.

\nonDiffTri

\subsection{Spun small Seifert fiber spaces}

The main goal of this section is to exhibit non-isotopic trisections which become isotopic after a single stabilization. Here, a simple class of Seifert fiber spaces proves amenable to study. Seifert fiber spaces with 3 exceptional fibers whose base space is a sphere are called \textbf{small Seifert fiber spaces}. These manifolds admit at most 3 vertical Heegaard splittings of genus 2. In \cite{BCZ}, the authors show that if for every singular fiber $\beta_i \not\equiv \pm 1 (mod\, \alpha_i)$, then $S(0,e;(\alpha_1,\beta_1), (\alpha_2,\beta_2),(\alpha_3,\beta_3))$ admits exactly 3 Heegaard splittings up to isotopy, all of which are vertical and distinguished by their Nielsen classes. 

These manifolds also admit a genus 3 Heegaard splitting obtained as follows: Take a path on the base space from a base point to each of the 3 exceptional points and connect each path to the respective  exceptional fiber to form a wedge of 3 circles. Let $H_1(3)$ to be a regular neighborhood of this graph and let $H_2(3)$ be the complement of the interior of $H_1$. Note that the graph we constructed naturally forms the spine for $H_1(3)$. We call this Heegaard splitting $\Sigma_3$.

This construction mirrors the construction of a vertical Heegaard splitting and one readily sees that the spines of $H_1(i,j)$ are subgraphs of the spine of $H_1(3)$. Two of these Heegaard splittings together with the genus 3 splitting are depicted in Figure \ref{fig:SmallSeifertSplittings}. The following corollary of the classification of Heegaard splittings of handlebodies in \cite{ST} tells us that these splittings are closely related.

\begin{figure}
\label{fig:SmallSeifertSplittings}
\begin{center}
\includegraphics[scale=.5]{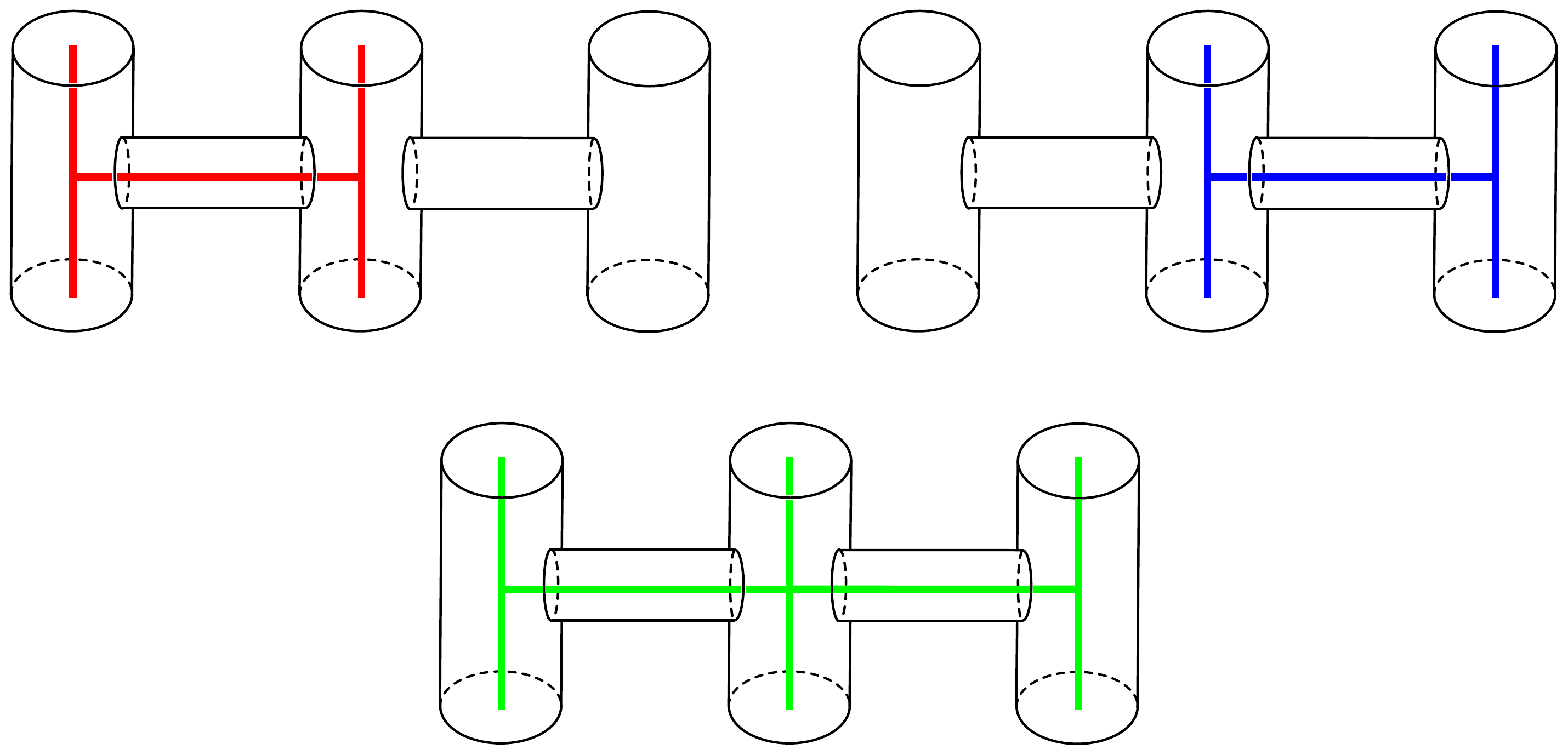}
\end{center}

\caption{Top: A schematic picture of spines of $H_1$ of the vertical Heegaard splittings of S(0,3). Horizontal tubes lie in a neighborhood of the base sphere while vertical tubes are neighborhoods of fibers. Bottom: A spine of a genus 3 handlebody which contains the spines of both of the vertical Heegaard splittings above.}
\end{figure}

\begin{corollary}
Let $H_1 \cup_\Sigma H_2$ and $H_1' \cup_{\Sigma'} H_2'$ be two Heegaard splittings of $M^3$ and let $S$ and $S'$ be spines of $H_1$ and $H_1'$ respectively. Then if $S$ is a subgraph of $S'$ then  $H_1' \cup_{\Sigma'} H_2'$ is a stabilization of $H_1 \cup_\Sigma H_2$.
\end{corollary}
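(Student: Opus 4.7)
My approach is to reduce the statement to the classification of Heegaard splittings of handlebodies in \cite{ST}. First, I would realize the subgraph inclusion $S \subset S'$ geometrically: after a preliminary isotopy, I may take $H_1$ and $H_1'$ to be regular neighborhoods of $S$ and $S'$ with $H_1 \subset H_1'$. Because $S$ and $S'$ are one-vertex spines sharing a common basepoint $v$, each loop of $S' \setminus S$ is based at $v \in \operatorname{int}(H_1)$, so its portion outside $H_1$ is a properly embedded arc in $C := \overline{H_1' \setminus H_1}$ with both endpoints on $\Sigma = \partial H_1$. Hence $C$ is obtained from $\Sigma \times I$ by attaching one $1$-handle for each edge of $S' \setminus S$; in particular $C$ is a compression body with $\partial_- C = \Sigma$ and $\partial_+ C = \Sigma'$.

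Now $H_1 \subset H_1'$ forces $H_2' \subset H_2$, and the Heegaard surface $\Sigma'$ decomposes $H_2$ as
\[ H_2 = C \cup_{\Sigma'} H_2', \]
which is a Heegaard splitting of the handlebody $H_2$ in the sense appropriate for manifolds with boundary: a compression body with negative boundary $\partial H_2$ on one side and a handlebody on the other. Scharlemann and Thompson's theorem \cite{ST} states that any such splitting is obtained from the trivial splitting $H_2 = (\partial H_2 \times I) \cup_{\partial H_2} H_2$ by a finite sequence of stabilizations, and I would invoke this directly to produce $g'-g$ stabilizations taking the trivial splitting of $H_2$ to $C \cup_{\Sigma'} H_2'$.

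Finally, I would promote each stabilization inside $H_2$ to a stabilization of the ambient splitting of $M$. Gluing $H_1$ onto the collar side of the trivial splitting of $H_2$ recovers $H_1 \cup_\Sigma H_2$ because $H_1 \cup_\Sigma (\Sigma \times I) \cong H_1$. A single Scharlemann--Thompson stabilization adjoins a trivial 1-handle to the inner splitting along an unknotted arc in one of the two sides of $\Sigma'$; in the ambient picture, after absorbing $H_1$, this arc is an unknotted arc in one of the two sides of the splitting of $M$, so the induced operation is precisely a classical Heegaard stabilization. Iterating $g'-g$ times produces $H_1' \cup_{\Sigma'} H_2'$ as a stabilization of $H_1 \cup_\Sigma H_2$. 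The main obstacle I anticipate is this last translation: verifying carefully that a stabilization of the inner splitting of the bounded manifold $H_2$ corresponds to a classical stabilization on the closed manifold $M$, rather than some more exotic modification, which amounts to checking that unknottedness of an arc inside $H_2$ is preserved upon absorbing the collar into $H_1$.
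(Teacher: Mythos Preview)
Your proposal is correct and follows precisely the route the paper indicates: the corollary is stated without proof in the paper, attributed directly to the Scharlemann--Thompson classification of Heegaard splittings of handlebodies in \cite{ST}, and your argument is exactly the natural unpacking of that citation---nest $H_1 \subset H_1'$ via the spine inclusion, recognize $\Sigma'$ as a Heegaard surface for the handlebody $H_2$, and invoke \cite{ST}. The concern you flag about promoting stabilizations from $H_2$ to $M$ is not a genuine obstacle, since a stabilization performed in the interior of $H_2$ (away from $\partial H_2 = \Sigma$) is by definition local to a ball disjoint from $H_1$ and hence is already a stabilization of the ambient splitting.
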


We therefore conclude that $\Sigma_3$ is a stabilization of $\Sigma(i,j)$ for all $\{ i,j \} \subset \{1,2,3 \}$. In otherwords, all 3 of the vertical Heegaard splittings of  $S(0,e; \allowbreak (\alpha_1,\beta_1), \allowbreak (\alpha_2,\beta_2), \allowbreak(\alpha_3,\beta_3))$ become isotopic after a single stabilization. An easy analysis of the local diagrammatic modifications in \cite{JM} used to pass between a Heegaard splitting and its spin shows that the spin of a stabilized Heegaard splitting is a stabilized trisection. This implies that $S(\Sigma(1,2))$, $S(\Sigma(2,3))$, and $S(\Sigma(1,3))$ are all non-isotopic trisections which become pairwise isotopic after a single (balanced) stabilization. The following proposition shows that a balanced stabilization as opposed to an unbalanced stabilization is required in a very strong sense. We refer the reader to \cite{MSZ} for the details of unbalanced trisections and unbalanced stabilizations.

\begin{proposition}
The trisections $S(\Sigma(1,2))$, $S(\Sigma(2,3))$, and $S(\Sigma(1,3))$ become pairwise isotopic after a single balanced stabilization, however, for any two $\{ k,l \} \subset \{ 1,2,3 \}$ these trisections remain pairwise non-isotopic after any sequence of $k-$ and $l-$stabilizations.
\end{proposition}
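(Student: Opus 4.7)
The plan is to handle the two assertions separately, the first being essentially immediate from results already assembled in this subsection.

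For the first assertion, I would argue that $S(\Sigma_3)$ serves as a common single balanced stabilization of all three trisections. Since the natural spine of $H_1(3)$ contains the spine of each $H_1(i,j)$ as a subgraph, the preceding corollary shows that $\Sigma_3$ is a stabilization of each $\Sigma(i,j)$. As noted in the text, the local diagrammatic description of Meier's construction shows that spinning a stabilized Heegaard splitting yields a stabilized trisection, so $S(\Sigma_3)$ is a single balanced stabilization of each of $S(\Sigma(1,2))$, $S(\Sigma(2,3))$, and $S(\Sigma(1,3))$. Being isotopic to a common stabilization, the three trisections become pairwise isotopic after one balanced stabilization.

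For the second assertion, fix $\{k,l\} \subset \{1,2,3\}$ and let $m$ denote the third index. The key geometric input is that an unbalanced $k$-stabilization in the sense of \cite{MSZ} is a local modification that attaches a $4$-dimensional $1$-handle to the sector $X_k$ only, leaving $X_l$ and $X_m$ unchanged as $4$-dimensional handlebodies (the modifications appear on their boundary $3$-manifolds as Heegaard stabilizations). Consequently, under any sequence of $k$- and $l$-stabilizations, the sector $X_m$ is unchanged as a $4$-handlebody, so any spine of $X_m$ remains a spine throughout the process. It follows that $\mathscr{N}(X_m) \subset \pi_1(S(M))$ is invariant under such sequences of stabilizations, and the proof of Proposition \ref{prop:triIsoImpliesNE} adapts directly to the unbalanced setting to detect non-isotopy via $\mathscr{N}(X_m)$.

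To finish, I would identify the initial $\mathscr{N}(X_m)$ for each of the three trisections. By Lemmas \ref{lem:spineOfHSisSpineOfTri} and \ref{lem:spinIsANE}, under the canonical identification of $\pi_1(S(M))$ with $\pi_1(M)$, the trisection $S(\Sigma(i,j))$ satisfies $\mathscr{N}(X_m) = \mathscr{N}(H_1(i,j))$ regardless of which index $m$ is chosen. Under the hypotheses of \cite{BCZ} on the small Seifert fiber space (in particular $\beta_i \not\equiv \pm 1 \pmod{\alpha_i}$ for each exceptional fiber), the three Nielsen classes $\mathscr{N}(H_1(1,2))$, $\mathscr{N}(H_1(2,3))$, and $\mathscr{N}(H_1(1,3))$ are pairwise distinct. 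Hence $\mathscr{N}(X_m)$ distinguishes the three trisections throughout any sequence of $k$- and $l$-stabilizations.

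The most delicate step will be the middle paragraph, where one must carefully verify from the definition in \cite{MSZ} that a $k$-stabilization genuinely does not alter $X_m$ as a $4$-dimensional handlebody, but only modifies the Heegaard splitting structure on its boundary. Once this local picture is pinned down, the Nielsen-class bookkeeping parallels the balanced case already handled in the main theorem.
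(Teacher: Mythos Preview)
Your proposal is correct and follows essentially the same approach as the paper: identify $\mathscr{N}(X_m)$ with $\mathscr{N}(H_1(i,j))$ via Lemmas \ref{lem:spineOfHSisSpineOfTri} and \ref{lem:spinIsANE}, observe that $X_m$ is untouched by $k$- and $l$-stabilizations, invoke the distinctness of the Nielsen classes from \cite{BCZ}, and conclude via Proposition \ref{prop:triIsoImpliesNE}. The paper simply asserts that $X_m$ is unchanged under $k$- and $l$-stabilizations without further comment, whereas you flag this as the point requiring care; your caution is reasonable but the underlying argument is the same.
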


\begin{proof}
Let $H_1^{i,j}$ be $H_1$ of the Heegaard splitting $\Sigma(i,j)$ and let $X_n^{i,j}$ be $X_n$ of $S(\Sigma(1,2))$.  By Lemma \ref{lem:spinIsANE} we have that for all $n \in \{ 1,2,3 \}$, $\mathscr{N}(X_n^{i,j}) = H_1^{i,j}$. Choose any $\{ k,l \} \subset \{ 1,2,3 \}$ and let $m$ be the remaining index. Then under any sequence of $k-$ and $l-$stabilizations, $X_m^{i,j}$ is unchanged. In particular, if we take $\{ i,j \} \neq \{ i',j' \}$ then under any sequence of $k-$ and $l-$ stabilizations $\mathscr{N}(X_m^{i,j}) \neq \mathscr{N}(X_m^{i',j'})$. By Proposition \ref{prop:triIsoImpliesNE} this implies that the trisections remain non-isotopic.
\end{proof}

\section*{Acknowledgments}

The author would like to thank Slava Krushkal and Jeff Meier for helpful conversations which clarified many of the arguments in this paper. The author was partially supported by NSF grant DMS-1612159.

\bibliography{mybib}

\begin{thebibliography}{10}

\bibitem{BS}
R.~I. {Baykur} and O.~{Saeki}.
\newblock {Simplifying indefinite fibrations on 4-manifolds}.
\newblock {\em ArXiv e-prints}, May 2017.

\bibitem{BCZ}
M.~Boileau, D.~J. Collins, and H.~Zieschang.
\newblock Genus {$2$} {H}eegaard decompositions of small {S}eifert manifolds.
\newblock {\em Ann. Inst. Fourier (Grenoble)}, 41(4):1005--1024, 1991.

\bibitem{BZ}
M.~Boileau and H.~Zieschang.
\newblock Heegaard genus of closed orientable {S}eifert {$3$}-manifolds.
\newblock {\em Invent. Math.}, 76(3):455--468, 1984.

\bibitem{BO}
Francis Bonahon and Jean-Pierre Otal.
\newblock Scindements de {H}eegaard des espaces lenticulaires.
\newblock {\em Ann. Sci. \'Ecole Norm. Sup. (4)}, 16(3):451--466 (1984), 1983.

\bibitem{RE}
Renate Engmann.
\newblock Nicht-hom\"oomorphe {H}eegaard-{Z}erlegungen vom {G}eschlecht {$2$}
  der zusammenh\"angenden {S}umme zweier {L}insenr\"aume.
\newblock {\em Abh. Math. Sem. Univ. Hamburg}, 35:33--38, 1970.

\bibitem{GK}
David Gay and Robion Kirby.
\newblock Trisecting 4--manifolds.
\newblock {\em Geom. Topol.}, 20(6):3097--3132, 2016.

\bibitem{DG}
David~T. Gay.
\newblock Trisections of {L}efschetz pencils.
\newblock {\em Algebr. Geom. Topol.}, 16(6):3523--3531, 2016.

\bibitem{KH}
K.~{Hayano}.
\newblock {On diagrams of simplified trisections and mapping class groups}.
\newblock {\em ArXiv e-prints}, November 2017.

\bibitem{JJ2}
Jesse Johnson.
\newblock Horizontal {H}eegaard splittings of {S}eifert fibered spaces.
\newblock {\em Pacific J. Math.}, 245(1):151--165, 2010.

\bibitem{DK}
D.~{Koenig}.
\newblock {Trisections of 3-manifold bundles over $S^1$}.
\newblock {\em ArXiv e-prints}, October 2017.

\bibitem{LP}
Fran\c{c}ois Laudenbach and Valentin Po\'enaru.
\newblock A note on {$4$}-dimensional handlebodies.
\newblock {\em Bull. Soc. Math. France}, 100:337--344, 1972.

\bibitem{LM}
Martin Lustig and Yoav Moriah.
\newblock Nielsen equivalence in {F}uchsian groups and {S}eifert fibered
  spaces.
\newblock {\em Topology}, 30(2):191--204, 1991.

\bibitem{LMR}
Martin Lustig, Yoav Moriah, and Gerhard Rosenberger.
\newblock Automorphisms of {F}uchsian groups and their lifts to free groups.
\newblock {\em Canad. J. Math.}, 41(1):123--131, 1989.

\bibitem{JM}
Jeffrey {Meier}.
\newblock {Trisections and spun 4-manifolds}.
\newblock {\em ArXiv e-prints}, August 2017.

\bibitem{MSZ}
Jeffrey Meier, Trent Schirmer, and Alexander Zupan.
\newblock Classification of trisections and the generalized property {R}
  conjecture.
\newblock {\em Proc. Amer. Math. Soc.}, 144(11):4983--4997, 2016.

\bibitem{MS}
Yoav Moriah and Jennifer Schultens.
\newblock Irreducible {H}eegaard splittings of {S}eifert fibered spaces are
  either vertical or horizontal.
\newblock {\em Topology}, 37(5):1089--1112, 1998.

\bibitem{JN}
Jakob Nielsen.
\newblock Die {I}somorphismengruppe der freien {G}ruppen.
\newblock {\em Math. Ann.}, 91(3-4):169--209, 1924.

\bibitem{KR}
Kurt Reidemeister.
\newblock Zur dreidimensionalen {T}opologie.
\newblock {\em Abh. Math. Sem. Univ. Hamburg}, 9(1):189--194, 1933.

\bibitem{GR}
Gerhard Rosenberger.
\newblock Automorphismen ebener diskontinuierlicher {G}ruppen.
\newblock In {\em Riemann surfaces and related topics: {P}roceedings of the
  1978 {S}tony {B}rook {C}onference ({S}tate {U}niv. {N}ew {Y}ork, {S}tony
  {B}rook, {N}.{Y}., 1978)}, volume~97 of {\em Ann. of Math. Stud.}, pages
  439--455. Princeton Univ. Press, Princeton, N.J., 1981.

\bibitem{RT}
J.~Hyam Rubinstein and Stephan Tillmann.
\newblock Multisections of piecewise linear manifolds, 2016.

\bibitem{ST}
Martin Scharlemann and Abigail Thompson.
\newblock Heegaard splittings of {$({\rm surface})\times I$} are standard.
\newblock {\em Math. Ann.}, 295(3):549--564, 1993.

\bibitem{JS}
James Singer.
\newblock Three-dimensional manifolds and their {H}eegaard diagrams.
\newblock {\em Trans. Amer. Math. Soc.}, 35(1):88--111, 1933.

\bibitem{FW}
Friedhelm Waldhausen.
\newblock Heegaard-{Z}erlegungen der {$3$}-{S}ph\"are.
\newblock {\em Topology}, 7:195--203, 1968.

\end{thebibliography}
	\bibliographystyle{plain}
\end{document}